\tikzstyle{io} = [rectangle, rounded corners, minimum width=2cm, minimum height=1cm,text centered, draw=black, fill=white]
\tikzstyle{arrow} = [thick,->,>=stealth]
\newtheorem{theorem}{Theorem}[section]
\newtheorem{lemma}[theorem]{Lemma}
\numberwithin{equation}{section}
\renewenvironment{proof}[1][\proofname]{%
  \par\pushQED{\qed}\normalfont%
  \trivlist\item[\hskip\labelsep\bfseries#1{.}]%
}{%
  \popQED
}
\begin{document}
\title[Symplectic Balls]{On Orthogonal Projections of Symplectic balls}
\author{Nuno C. Dias}

\author{Maurice A. de Gosson}

\author{Jo\~{a}o N. Prata}

\date{}
\keywords{Symplectic ball, orthogonal projection, Gromov's non-squeezing theorem, }

\begin{abstract}
We study the orthogonal projections of symplectic balls in $\mathbb{R}^{2n}$
on complex subspaces. In particular we show that these projections are
themselves symplectic balls under a certain complexity assumption. Our main
result is a refinement of a recent very interesting result of Abbondandolo and
Matveyev extending the linear version of Gromov's non-squeezing theorem. We
use a conceptually simpler approach where the Schur complement of a matrix
plays a central role.

\end{abstract}
\maketitle
\tableofcontents

\section{Introduction}

\subsection{What is known}

Let $\sigma=dp_{1}\wedge dx_{1}+\cdot\cdot\cdot+dp_{n}\wedge dx_{n}$ be the
standard symplectic form on $\mathbb{R}^{2n}\equiv\mathbb{R}^{n}%
\times\mathbb{R}^{n}$; we call \emph{symplectic ball} the image of the ball
\[
B^{2n}(z_{0},R)=\{z\in\mathbb{R}^{2n}:|z-z_{0}|\leq R\}
\]
by a symplectic automorphism $S\in\operatorname*{Sp}(n)$ (the symplectic group
of $(\mathbb{R}^{2n},\sigma)$). As a consequence of Gromov's non-squeezing
theorem \cite{Gromov,HZ} the orthogonal projection of a symplectic ball
$S(B^{2n}(z_{0},R))$ on any two-dimensional symplectic subspace of
$(\mathbb{R}^{2n},\sigma)$ has area at least equal to $\pi R^{2}$.
Abbondandolo and Matveyev asked in \cite{abbo} the question whether this
result could be generalized to subspaces with higher dimensions. They showed
that the orthogonal projection $\Pi_{\mathbb{V}}S(B^{2n}(z_{0},R))$ of
$S(B^{2n}(z_{0},R))$ onto an arbitrary complex symplectic subspace
$(\mathbb{V},\sigma_{|\mathbb{V}})$ of $(\mathbb{R}^{2n},\sigma)$ such that
$\dim\mathbb{V}=2k$ satisfies
\begin{equation}
\operatorname*{Vol}\nolimits_{\mathbb{V}}\Pi_{\mathbb{V}}S(B^{2n}%
(z_{0},R))\geq\frac{(\pi R^{2})^{k}}{k!}\label{abbo}%
\end{equation}
where $\operatorname*{Vol}\nolimits_{\mathbb{V}}$ is the volume element on
$\mathbb{V}$. Notice that $(\pi R^{2})^{k}/k!$ is the volume of the ball
$B_{\mathbb{V}}(\Pi_{\mathbb{V}}z_{0},R)$ in $\mathbb{V}$:
\begin{equation}
\operatorname*{Vol}\nolimits_{\mathbb{V}}(B_{\mathbb{V}}(\Pi_{\mathbb{V}}%
z_{0},R))=\frac{(\pi R^{2})^{k}}{k!}.\label{abbobis}%
\end{equation}
They moreover proved that equality holds in (\ref{abbo}) if and only
$S^{T}\mathbb{V}$ is itself a complex subspace of $\mathbb{R}^{2n}$. The
inequality (\ref{abbo}) implies the linear version of Gromov's theorem when
$\dim\mathbb{V=}2$ and conservation of volume by linear symplectomorphisms
when $\mathbb{V}=\mathbb{R}^{2n}$. Abbondandolo and and Matveyev proved their
results using an ingenuous but complicated argument involving the Wirtinger
inequality for 2-forms on K\"{a}hler manifolds \cite{Fed69}. Results of this
type are more subtle and difficult than they might appear at first sight; for
instance as Abbondandolo and Matveyev show the inequality (\ref{abbo}) does
not hold when one replaces $S$ by a nonlinear symplectomorphism $f$. In fact,
one can construct examples where $\operatorname*{Vol}\nolimits_{\mathbb{V}}%
\Pi_{\mathbb{V}}f(B^{2n}(R))$ can become arbitrarily small. They however make
an interesting conjecture, to which we will come back at the end of this paper.

\subsection{What we will do}

We will prove by elementary means a stronger version of (\ref{abbo}) and of
its extension. We will actually prove (Theorem \ref{Prop1}) that the
orthogonal projection of a symplectic ball on a symplectic subspace contains a
symplectic ball with the same radius in this subspace, and is itself a
symplectic ball when the subspace under consideration is complex. The proof
will be done in the particular case where the symplectic space $\mathbb{V}$ is
of the type $\mathbb{R}^{2n_{A}}\oplus0$ in which case the symplectic
orthogonal $\mathbb{V}^{\sigma}$ is $0\oplus\mathbb{R}^{2n_{B}}$; our
refinement of (\ref{abbo}) says that for every $S\in\operatorname*{Sp}(n)$
there exists $S_{A}\in\operatorname*{Sp}(n_{A})$ (the symplectic group of
$\mathbb{R}^{2n_{A}}\oplus0\equiv\mathbb{R}^{2n_{A}}$\footnote{For the sake of
simplicity, we make the identification $\mathbb{R}^{2n_{A}}\oplus
0\equiv\mathbb{R}^{2n_{A}}$. In particular, we write by abuse of language
$\Pi_{\mathbb{V}}(z_{A},z_{B})=z_{A}$ instead of $(z_{A},0)$.}) such that
\begin{equation}
\Pi_{\mathbb{V}}S(B^{2n}(z_{0},R))\supseteq S_{A}(B^{2n_{A}}(z_{0,A},R))
\label{abboprime}%
\end{equation}
and $z_{0,A}=\Pi_{\mathbb{V}}z_{0}$.

This will be done using the theory of Schur complements and the notion of
symplectic spectrum of a positive definite matrix. Since symplectomorphisms
are volume-preserving, (\ref{abboprime}) implies (\ref{abbo}). It is however a
much stronger statement than (\ref{abbo}) because, given two measurable sets
$\Omega$ and $\Omega^{\prime}$ with the same volume, there does not in general
exist a symplectomorphism (let alone a linear one) taking $\Omega$ to
$\Omega^{\prime}$ as soon as the dimension of the symplectic space exceeds two
\cite{HZ}.

Note that these results are invariant under phase space translations. We will
therefore assume henceforth that $z_{0}=0$.

We finally discuss in section \ref{SecPerspectives} some possible extensions
to the non-linear case, pointing out the difficulties.

\section{Preliminaries\label{secone}}

In what follows $M$ will be a real $2n\times2n$ positive definite matrix; we
will write $M>0$. We denote by $J$ the standard symplectic matrix $%
\begin{pmatrix}
0_{n} & I_{n}\\
-I_{n} & 0_{n}%
\end{pmatrix}
$. We have, $\sigma(z,z^{\prime})=Jz\cdot z^{\prime}$ when $z=(x,p)$,
$z^{\prime}=(x^{\prime},p^{\prime})$. In this notation the condition
$S\in\operatorname*{Sp}(n)$ is equivalent to $S^{T}JS=J$ (or $SJS^{T}=J$)
where $S^{T}$ is the transpose of $S$.

\subsection{Williamson's symplectic diagonalization}

By definition the symplectic spectrum of $M$ is the increasing sequence
$\lambda_{1}^{\sigma}(M)\leq$ $\lambda_{2}^{\sigma}(M)\leq\cdot\cdot\cdot
\leq\lambda_{n}^{\sigma}(M)$ of numbers $\lambda_{j}^{\sigma}(M)>0$ where the
$\pm i\lambda_{j}^{\sigma}(M)$ are the eigenvalues of $JM$ (which are the same
as those of the antisymmetric matrix $M^{1/2}JM^{1/2}$). We will use the
following property, known in the literature as \textquotedblleft Williamson's
symplectic diagonalization theorem\textquotedblright\ \cite{Birk,HZ}: there
exists $S\in\operatorname*{Sp}(n)$ such that
\[
M=S^{T}DS\text{ , }D=%
\begin{pmatrix}
\Lambda & 0\\
0 & \Lambda
\end{pmatrix}
\]
where $\Lambda$ is the diagonal matrix whose eigenvalues are the numbers
$\lambda_{j}^{\sigma}(M)$ (all matrices corresponding here to the standard
splitting $z=(x,p)$). The symplectic spectra of $M$ and $M^{-1}$ are inverses
of each other in the sense that:
\begin{equation}
\lambda_{j}^{\sigma}(M^{-1})=\lambda_{n-j}^{\sigma}(M)^{-1}\text{
\ \textit{for }\ }1\leq j\leq n. \label{invspec}%
\end{equation}
We also have the less obvious property (\cite{Birk},  section 8.3.2)
\begin{equation}
M\leq N\Longrightarrow\lambda_{j}^{\sigma}(M)\leq\lambda_{j}^{\sigma}(N)\text{
\ \textit{for }\ }1\leq j\leq n \label{MN}%
\end{equation}
where $M\leq N$ means $N-M\geq0$.

The following simple result characterizing positive semi-definiteness in terms
of the symplectic spectrum will be very useful for proving Theorem \ref{Prop1}:

\begin{lemma}
\label{Lemma}The Hermitian matrix $M+iJ$ is positive semi-definite:
$M+iJ\geq0$ if and only if $\lambda_{j}^{\sigma}(M)\geq1$ for $1\leq j\leq n$.
\end{lemma}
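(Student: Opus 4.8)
The plan is to exploit the invariance of the condition $M+iJ\geq0$ under symplectic congruence, which reduces the statement to the Williamson normal form, where everything decouples into $2\times2$ blocks. First I would record that $M+iJ$ is indeed Hermitian: since $M$ is real symmetric and $J^{T}=-J$, one has $(M+iJ)^{*}=M^{T}-iJ^{T}=M+iJ$.

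The key observation is that for any real $S\in\operatorname*{Sp}(n)$ the congruence $S^{*}(M+iJ)S$ preserves the structure. Because $S$ is real we have $S^{*}=S^{T}$, and using $S^{T}JS=J$ we get
\[
S^{T}(M+iJ)S=(S^{T}MS)+iJ.
\]
Since $S$ is invertible, congruence by $S$ preserves positive semi-definiteness, so $M+iJ\geq0$ if and only if $(S^{T}MS)+iJ\geq0$; moreover the symplectic spectrum is unchanged, $\lambda_{j}^{\sigma}(S^{T}MS)=\lambda_{j}^{\sigma}(M)$. Applying this with the Williamson transformation (so that $S^{T}MS$ becomes the normal form $D=\operatorname{diag}(\Lambda,\Lambda)$), I reduce the claim to showing that $D+iJ\geq0$ if and only if every $\lambda_{j}^{\sigma}(M)\geq1$.

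For the reduced problem I would permute the coordinates so as to pair the $j$-th position variable with the $j$-th momentum variable; this orthogonal conjugation turns $D+iJ$ into a direct sum of Hermitian $2\times2$ blocks
\[
\begin{pmatrix}\lambda_{j}^{\sigma}(M) & i\\ -i & \lambda_{j}^{\sigma}(M)\end{pmatrix},
\]
whose eigenvalues are $\lambda_{j}^{\sigma}(M)\pm1$. Such a block is positive semi-definite exactly when $\lambda_{j}^{\sigma}(M)-1\geq0$, the eigenvalue $\lambda_{j}^{\sigma}(M)+1$ being automatically positive since $\lambda_{j}^{\sigma}(M)>0$. Hence the direct sum is positive semi-definite precisely when $\lambda_{j}^{\sigma}(M)\geq1$ for all $j$, which yields the desired equivalence.

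I do not anticipate a serious obstacle here: the entire content lies in the first reduction, and the only point requiring care is that the relevant map is a genuine $*$-congruence by a \emph{real} matrix, so that $S^{*}=S^{T}$ and the symplectic identity $S^{T}JS=J$ applies directly to the imaginary part. One could alternatively avoid the explicit permutation by computing $\det(D+iJ)=\prod_{j}\bigl((\lambda_{j}^{\sigma}(M))^{2}-1\bigr)$ and combining it with an eigenvalue/continuity argument, but the block decomposition seems cleanest.
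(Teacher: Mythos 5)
Your proposal is correct and follows essentially the same route as the paper: reduce to the Williamson normal form via the symplectic congruence $S^{T}(M+iJ)S=S^{T}MS+iJ$, then read off the eigenvalues $\lambda_{j}^{\sigma}(M)\pm1$ of $D+iJ$ (the paper does this by factoring the characteristic polynomial into the quadratics $\lambda^{2}-2\lambda_{j}^{\sigma}(M)\lambda+\lambda_{j}^{\sigma}(M)^{2}-1$, which is exactly your $2\times2$ block computation). No gaps; your explicit remark that the congruence is by a real matrix so $S^{*}=S^{T}$ is a nice touch the paper leaves implicit.
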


\begin{proof}
Let $M=S^{T}DS$ be a Williamson diagonalization of $M$; since $S^{T}JS=J$ the
condition $M+iJ\geq0$ is equivalent to $D+iJ\geq0$. The characteristic
polynomial of $D+iJ$ is the product $P(\lambda)=P_{1}(\lambda)\cdot\cdot\cdot
P_{n}(\lambda)$ of polynomials%
\[
P_{j}(\lambda)=\lambda^{2}-2\lambda_{j}^{\sigma}(M)\lambda+\lambda_{j}%
^{\sigma}(M)^{2}-1
\]
and the eigenvalues of $M+iJ$ are thus the numbers $\lambda_{j}=\lambda
_{j}^{\sigma}(M)\pm1$. The condition $M+iJ\geq0$ is equivalent to $\lambda
_{j}\geq0$, that is to $\lambda_{j}^{\sigma}(M)\geq1$ for $j=1,...,n$.
\end{proof}

Notice that if $S\in\operatorname*{Sp}(n)$ we have
\begin{equation}
S^{T}S+iJ\geq0\text{ \ and }SS^{T}+iJ\geq0\label{sij}%
\end{equation}
since $\lambda_{j}^{\sigma}(S^{T}S)=\lambda_{j}^{\sigma}(SS^{T})=1$ for all
$j$ (because $D=I$ in view of Williamson's diagonalization result).

\subsection{Block-matrix partitions and Schur complements}

Let $\mathbb{R}^{2n_{A}}\equiv\mathbb{R}^{2n_{A}}\oplus0$ and $\mathbb{R}%
^{2n_{B}}\equiv0\oplus\mathbb{R}^{2n_{B}}$ be two symplectic subspaces of
$\mathbb{R}^{2n}$. We split $(\mathbb{R}^{2n},\sigma)$ as a direct sum
$(\mathbb{R}^{2n_{A}}\oplus\mathbb{R}^{2n_{B}},\sigma_{A}\oplus\sigma_{B})$
where $\sigma_{A}$ and $\sigma_{B}$ are, respectively, the restrictions of
$\sigma$ to $\mathbb{R}^{2n_{A}}$ and $\mathbb{R}^{2n_{B}}$. We write
$z\in\mathbb{R}^{2n}$ as $z=(z_{A},z_{B})=z_{A}\oplus z_{B}$ with
$z_{A}=(x_{A},p_{A})\in\mathbb{R}^{2n_{A}}$ and $z_{B}=(x_{B},p_{B}%
)\in\mathbb{R}^{2n_{B}}$.

We denote by $\Pi_{A}$ (\textit{resp}. $\Pi_{B}$) the orthogonal projection
$\mathbb{R}^{2n}\longrightarrow\mathbb{R}^{2n_{A}}$ (\textit{resp.}
$\mathbb{R}^{2n_{B}}$). We choose symplectic bases $\mathcal{B}_{A}$,
$\mathcal{B}_{B}$ in $\mathbb{R}^{2n_{A}}$ and $\mathbb{R}^{2n_{B}}$ and
identify linear mappings $\mathbb{R}^{2n}\longrightarrow\mathbb{R}^{2n}$ with
their matrices in the symplectic basis $\mathcal{B}=\mathcal{B}_{A}%
\oplus\mathcal{B}_{B}$ of $\mathbb{R}^{2n}$. Such a matrix will be written as
\begin{equation}
M=%
\begin{pmatrix}
M_{AA} & M_{AB}\\
M_{BA} & M_{BB}%
\end{pmatrix}
\label{MC}%
\end{equation}
the blocks $M_{AA}$, $M_{AB}$, $M_{BA}$, $M_{BB}$ having dimensions
$2n_{A}\times2n_{A}$, $2n_{A}\times2n_{B}$, $2n_{B}\times2n_{A}$,
$2n_{B}\times2n_{B}$, respectively. Similarly, the standard symplectic matrix
$J$ will be split as
\[
J=J_{A}\oplus J_{B}\equiv%
\begin{pmatrix}
J_{A} & 0\\
0 & J_{B}%
\end{pmatrix}
\]
where $J_{A}$ (\textit{resp.} $J_{B}$) is the standard symplectic matrix in
$(\mathbb{R}^{2n_{A}},\sigma_{A})$ (\textit{resp}. $(\mathbb{R}^{2n_{B}%
},\sigma_{B})$).

Since $M$ is positive definite and symmetric the upper-left and lower-right
blocks in (\ref{MC}) are themselves positive-definite and symmetric:
$M_{AA}>0$ and $M_{BB}>0$. In particular the Schur complements%
\begin{align}
M/M_{BB}  &  =M_{AA}-M_{AB}M_{BB}^{-1}M_{BA}\label{SchurB}\\
M/M_{AA}  &  =M_{BB}-M_{BA}M_{AA}^{-1}M_{AB} \label{SchurA}%
\end{align}
are well defined and invertible \cite{Zhang}, and the inverse of the matrix
$M$ is given by the formula%
\begin{equation}
M^{-1}=%
\begin{pmatrix}
(M/M_{BB})^{-1} & -(M/M_{BB})^{-1}M_{AB}M_{BB}^{-1}\\
-M_{BB}^{-1}M_{BA}(M/M_{BB})^{-1} & (M/M_{AA})^{-1}%
\end{pmatrix}
. \label{Minv}%
\end{equation}

\subsection{Orthogonal projections of ellipsoids in $\mathbb{R}^{2n}$}

We will also need the following general characterization of the orthogonal
projection of an ellipsoid on a subspace:

\begin{lemma}
\label{Lemma1}Let $\Pi_{A}$ be the orthogonal projection $\mathbb{R}%
^{2n}\longrightarrow\mathbb{R}^{2n_{A}}$ and
\[
\Omega=\{z\in\mathbb{R}^{2n}:Mz^{2}\leq R^{2}\}.
\]
We have
\begin{equation}
\Pi_{A}\Omega=\{z_{A}\in\mathbb{R}^{2n_{A}}:(M/M_{BB})z_{A}^{2}\leq R^{2}\}.
\label{bounda}%
\end{equation}

\end{lemma}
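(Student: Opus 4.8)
The plan is to reduce the assertion to a single finite-dimensional minimization. Since $\mathbb{R}^{2n_{A}}\oplus 0$ and $0\oplus\mathbb{R}^{2n_{B}}$ are Euclidean orthogonal complements, the orthogonal projection $\Pi_{A}$ simply forgets the $B$-component, $\Pi_{A}(z_{A},z_{B})=z_{A}$. Hence $z_{A}\in\Pi_{A}\Omega$ if and only if there exists some $z_{B}\in\mathbb{R}^{2n_{B}}$ with $(z_{A},z_{B})\in\Omega$, i.e. with $M(z_{A},z_{B})^{2}\leq R^{2}$. This means
\[
z_{A}\in\Pi_{A}\Omega\ \Longleftrightarrow\ \min_{z_{B}\in\mathbb{R}^{2n_{B}}}M(z_{A},z_{B})^{2}\leq R^{2},
\]
so everything comes down to evaluating this minimum in closed form.

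First I would expand the quadratic form using the block structure (\ref{MC}) and the symmetry $M_{BA}=M_{AB}^{T}$, obtaining $M(z_{A},z_{B})^{2}=M_{AA}z_{A}\cd z_{A}+2M_{BA}z_{A}\cd z_{B}+M_{BB}z_{B}\cd z_{B}$. The key algebraic step is the completion of the square in $z_{B}$, which I expect to write as
\[
M(z_{A},z_{B})^{2}=M_{BB}\bigl(z_{B}+M_{BB}^{-1}M_{BA}z_{A}\bigr)\cd\bigl(z_{B}+M_{BB}^{-1}M_{BA}z_{A}\bigr)+(M/M_{BB})z_{A}^{2},
\]
where I use that $M_{BB}>0$ (so $M_{BB}^{-1}$ exists and is symmetric) together with the definition (\ref{SchurB}) of the Schur complement. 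Because $M_{BB}>0$, the first term is a nonnegative quadratic form in $z_{B}$ that vanishes exactly at $z_{B}^{\ast}=-M_{BB}^{-1}M_{BA}z_{A}$; therefore the minimum over $z_{B}$ is attained at $z_{B}^{\ast}$ and equals precisely $(M/M_{BB})z_{A}^{2}$.

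With the minimum identified, both inclusions follow immediately. If $z_{A}\in\Pi_{A}\Omega$, then some admissible $z_{B}$ gives $(M/M_{BB})z_{A}^{2}\leq M(z_{A},z_{B})^{2}\leq R^{2}$; conversely, if $(M/M_{BB})z_{A}^{2}\leq R^{2}$, then choosing $z_{B}=z_{B}^{\ast}$ produces a point $(z_{A},z_{B}^{\ast})\in\Omega$ projecting onto $z_{A}$. This yields the claimed equality (\ref{bounda}). The computation is entirely elementary, so there is no serious obstacle; the only point requiring care is the completion-of-the-square identity, where one must track the transposes and invoke $M_{BA}^{T}=M_{AB}$ along with the symmetry of $M_{BB}^{-1}$ so that the two cross terms recombine correctly into $M_{AB}M_{BB}^{-1}M_{BA}$. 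Positive definiteness of $M$ is exactly what guarantees $M_{BB}>0$, making $z_{B}^{\ast}$ well defined and turning the infimum into an attained minimum.
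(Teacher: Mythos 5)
Your proof is correct, and it takes a genuinely different route from the paper's. The paper argues via the \emph{shadow boundary}: it characterizes $\partial\Pi_{A}\Omega$ as the set of points where the normal $\partial_{z}Q(z)=2Mz$ to the ellipsoid lies in $\mathbb{R}^{2n_{A}}\oplus0$, derives from this the same relation $z_{B}=-M_{BB}^{-1}M_{BA}z_{A}$ that you obtain as the minimizer, and substitutes it back into the defining equation to identify the boundary of the projection. You instead observe that $z_{A}\in\Pi_{A}\Omega$ iff $\min_{z_{B}}M(z_{A},z_{B})^{2}\leq R^{2}$ and evaluate that minimum by completing the square, which produces the Schur complement directly; your completion-of-the-square identity checks out (the cross terms recombine correctly using $M_{AB}=M_{BA}^{T}$ and the symmetry of $M_{BB}^{-1}$, and adding $(M/M_{BB})z_{A}^{2}$ restores $M_{AA}z_{A}^{2}$). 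Your version has the advantage of proving the set equality outright --- both inclusions fall out of the identified minimum and minimizer --- whereas the paper's normal-vector argument strictly speaking only locates the boundary of the projection and leaves the passage from boundary to the full closed region implicit (it relies on convexity and compactness of the ellipsoid). What the paper's approach buys is a geometric picture of which points of $\partial\Omega$ actually cast the shadow, which is not visible in the purely algebraic minimization. Both methods hinge on the same formula $z_{B}^{\ast}=-M_{BB}^{-1}M_{BA}z_{A}$ and on $M_{BB}>0$, which you correctly note follows from $M>0$.
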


\begin{proof}
Let us set $Q(z)=$ $Mz^{2}-R^{2}$; the boundary $\partial\Omega$ of the
hypersurface $Q(z)=0$ is defined by%
\begin{equation}
M_{AA}z_{A}^{2}+2M_{BA}z_{A}\cdot z_{B}+M_{BB}z_{B}^{2}=R^{2}. \label{maza}%
\end{equation}
A point $z_{A}$ belongs to $\partial\Pi_{A}\Omega$ if and only if the normal
vector to $\partial\Omega$ at the point $z=(z_{A},z_{B})$ is parallel to
$\mathbb{R}^{2n_{A}}$ hence the constraint
\[
\partial_{z}Q(z)=2Mz\in\mathbb{R}^{2n_{A}}\oplus0~.
\]
This is equivalent to the condition $M_{BA}z_{A}+M_{BB}z_{B}=0$, that is to
$z_{B}=-M_{BB}^{-1}M_{BA}z_{A}$. Inserting $z_{B}$ in (\ref{maza}) shows that
the boundary $\partial\Pi_{A}\Omega$ is the set of all $z_{A}\in
\mathbb{R}^{2n_{A}}$ such that $(M/M_{BB})z_{A}^{2}=R^{2}$ hence formula
(\ref{bounda}).
\end{proof}

Interchanging $A$ and $B$ the orthogonal projection of $\Omega$ on
$\mathbb{R}^{2n_{B}}$ is similarly given by%
\begin{equation}
\Pi_{B}\Omega=\{z_{B}\in\mathbb{R}^{2n_{B}}:(M/M_{AA})z_{B}^{2}\leq R^{2}\}.
\label{boundb}%
\end{equation}

\section{Orthogonal Projections of Symplectic Balls\label{sectwo}}

\subsection{The main result: statement and proof}

Let us now prove the main result. We assume again the matrix $M$ is written in
block-form (\ref{MC}). To simplify notation we also assume that all balls
$B^{2n}(z_{0},R)$ are centered at the origin and set $B^{2n}(0,R)=B^{2n}%
(R)$.The case of a general ball $B^{2n}(z_{0},R)$ trivially follows using the
translation $z\longmapsto z+z_{0}$.

\begin{theorem}
\label{Prop1}Let $S\in\operatorname*{Sp}(n)$. (i) There exists $S_{A}%
\in\operatorname*{Sp}(n_{A})$ such that
\begin{equation}
\Pi_{A}(S(B^{2n}(R))\supseteq S_{A}(B^{2n_{A}}(R)); \label{abbo1}%
\end{equation}
(ii) We have
\begin{equation}
\Pi_{A}(S(B^{2n}(R))=S_{A}(B^{2n_{A}}(R)) \label{abbo2}%
\end{equation}
if and only if $S=S_{A}\oplus S_{B}$ for some $S_{B}\in\operatorname*{Sp}%
(n_{B})$, in which case we also have%
\begin{equation}
\Pi_{B}(S(B^{2n}(R))=S_{B}(B^{2n_{B}}(R)) \label{abbo3}%
\end{equation}

\end{theorem}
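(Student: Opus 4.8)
The plan is to realise the symplectic ball as a sublevel set of a quadratic form and to read off its orthogonal projection through Lemma~\ref{Lemma1}. Since $z\in S(B^{2n}(R))$ if and only if $|S^{-1}z|\leq R$, one has $S(B^{2n}(R))=\{z\in\mathbb{R}^{2n}:Mz^{2}\leq R^{2}\}$ with $M=(SS^{T})^{-1}$, so that $M^{-1}=SS^{T}=:G$. By Lemma~\ref{Lemma1} the projection is the ellipsoid $\Pi_{A}(S(B^{2n}(R)))=\{z_{A}:(M/M_{BB})z_{A}^{2}\leq R^{2}\}$, and formula (\ref{Minv}) identifies its matrix: $(M/M_{BB})^{-1}=(M^{-1})_{AA}=G_{AA}$, the upper-left $2n_{A}\times 2n_{A}$ block of $SS^{T}$. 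The whole problem is thus reduced to understanding the symplectic spectrum of $G_{AA}$.

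The key observation is that $G_{AA}+iJ_{A}$ is a principal submatrix of the Hermitian matrix $SS^{T}+iJ$, which is positive semi-definite by (\ref{sij}); here I use that $J=J_{A}\oplus J_{B}$ is block-diagonal, so that $iJ$ only affects the diagonal blocks. Since any principal submatrix of a positive semi-definite Hermitian matrix is positive semi-definite, $G_{AA}+iJ_{A}\geq 0$, and Lemma~\ref{Lemma} applied in $\mathbb{R}^{2n_{A}}$ gives $\lambda_{j}^{\sigma}(G_{AA})\geq 1$ for all $j$. By the inversion rule (\ref{invspec}) this means $\lambda_{j}^{\sigma}(M/M_{BB})\leq 1$ for all $j$. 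To prove (i) I would then Williamson-diagonalize $M/M_{BB}=U^{T}D_{A}U$ with $U\in\operatorname*{Sp}(n_{A})$ and $D_{A}=\operatorname{diag}(\Lambda_{A},\Lambda_{A})\leq I_{2n_{A}}$, whence $M/M_{BB}\leq U^{T}U$. Setting $S_{A}=U^{-1}\in\operatorname*{Sp}(n_{A})$ one has $(S_{A}S_{A}^{T})^{-1}=U^{T}U\geq M/M_{BB}$, and since for positive definite $P,Q$ the inclusion $\{Pz^{2}\leq R^{2}\}\subseteq\{Qz^{2}\leq R^{2}\}$ is equivalent to $Q\leq P$, this yields $S_{A}(B^{2n_{A}}(R))\subseteq\Pi_{A}(S(B^{2n}(R)))$, which is (\ref{abbo1}).

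For (ii), equality of the two ellipsoids holds precisely when $M/M_{BB}=U^{T}U$, i.e. when $D_{A}=I_{2n_{A}}$, i.e. when every $\lambda_{j}^{\sigma}(M/M_{BB})=1$, equivalently when $G_{AA}$ is itself symplectic. The easy implication is immediate: if $S=S_{A}\oplus S_{B}$ then $G=SS^{T}=(S_{A}S_{A}^{T})\oplus(S_{B}S_{B}^{T})$ is block-diagonal, so $M_{AB}=0$, $M/M_{BB}=M_{AA}=(S_{A}S_{A}^{T})^{-1}$, and equality holds; the symmetric computation using (\ref{boundb}) gives $M/M_{AA}=M_{BB}=(S_{B}S_{B}^{T})^{-1}$ and hence (\ref{abbo3}).

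The main obstacle is the converse: deducing the block-diagonal structure of $S$ from the single condition that $G_{AA}$ be symplectic. Here I would argue through kernels. If $G_{AA}$ is symplectic then, as in the proof of Lemma~\ref{Lemma}, the eigenvalues of $G_{AA}+iJ_{A}$ are the numbers $\lambda_{j}^{\sigma}(G_{AA})\pm 1\in\{0,2\}$, so $\ker(G_{AA}+iJ_{A})$ has dimension $n_{A}$. For $v_{A}$ in this kernel, extend it by zero to $v=(v_{A},0)\in\mathbb{C}^{2n}$; then $\langle(G+iJ)v,v\rangle=\langle(G_{AA}+iJ_{A})v_{A},v_{A}\rangle=0$, and positive semi-definiteness of $G+iJ$ forces $(G+iJ)v=0$, whose $B$-block reads $G_{BA}v_{A}=0$. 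Applying the same reasoning to $G-iJ=\overline{G+iJ}\geq 0$ gives $G_{BA}w_{A}=0$ for every $w_{A}\in\ker(G_{AA}-iJ_{A})$. Since $J_{A}$ is invertible these two kernels intersect trivially and, both being $n_{A}$-dimensional, together span $\mathbb{C}^{2n_{A}}$; hence $G_{BA}=0$ and $G_{AB}=G_{BA}^{T}=0$, so $G=SS^{T}$ is block-diagonal with symplectic blocks $G_{AA},G_{BB}$. Writing $G_{AA}=S_{A}S_{A}^{T}$ and $G_{BB}=S_{B}S_{B}^{T}$ with symplectic square roots, $S$ and $S_{A}\oplus S_{B}$ differ by a symplectic orthogonal matrix, which stabilises $B^{2n}(R)$; thus $S(B^{2n}(R))=(S_{A}\oplus S_{B})(B^{2n}(R))$ and we may take $S=S_{A}\oplus S_{B}$. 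I expect this complementarity-of-kernels step to be the delicate point, since it is exactly what converts the borderline case of the spectral inequality into the rigid splitting of $S$.
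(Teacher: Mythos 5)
Your proposal is correct. For part (i) it follows the paper's argument essentially verbatim: reduce to the ellipsoid $\{(M/M_{BB})z_{A}^{2}\leq R^{2}\}$ via Lemma \ref{Lemma1} and (\ref{Minv}), observe that $(M/M_{BB})^{-1}=(SS^{T})_{AA}$ inherits positive semi-definiteness of $SS^{T}+iJ$ as a principal submatrix (the paper phrases this as $M^{-1}+iJ\geq0$, which is the same matrix), deduce $\lambda_{j}^{\sigma}(M/M_{BB})\leq1$ from Lemma \ref{Lemma} and (\ref{invspec}), and finish by Williamson diagonalization. Where you genuinely diverge is the converse in part (ii). The paper writes out the symplecticity identity $M^{-1}(J_{A}\oplus J_{B})M^{-1}=J_{A}\oplus J_{B}$ in block form and manipulates the resulting equations (\ref{eqArray1})--(\ref{eqArray3}) to force the off-diagonal block $X$ of $M^{-1}$ to vanish (first $XJ_{B}X^{T}=0$, then $X(M/M_{AA})X^{T}=0$). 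You instead exploit the spectral rigidity of the borderline case: when $G_{AA}=(M/M_{BB})^{-1}$ is symplectic, $G_{AA}+iJ_{A}$ has an $n_{A}$-dimensional kernel, its elements extended by zero are null vectors of the positive semi-definite Hermitian matrix $G+iJ$, and the $B$-component of $(G+iJ)v=0$ kills $G_{BA}$ on that kernel; the conjugate matrix $G-iJ$ supplies a complementary kernel, the two together span $\mathbb{C}^{2n_{A}}$, and $G_{BA}=0$ follows. Both routes are valid; yours has the merit of reusing the Hermitian positivity already at the heart of part (i), and of being explicit about the final reduction --- that $SS^{T}=(S_{A}\oplus S_{B})(S_{A}\oplus S_{B})^{T}$ only determines $S$ up to a symplectic orthogonal factor, which stabilizes the ball --- a point the paper glosses over when it concludes ``hence $S=S_{A}\oplus S_{B}$''. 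One cosmetic caveat: the eigenvalues of $G_{AA}+iJ_{A}$ are not literally the numbers $\lambda_{j}^{\sigma}(G_{AA})\pm1$, since Williamson diagonalization is a congruence rather than a similarity; but Sylvester's law of inertia preserves the rank, so the only fact you use, $\dim\ker(G_{AA}+iJ_{A})=n_{A}$, is correct (the paper's own proof of Lemma \ref{Lemma} contains the same harmless imprecision).
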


\begin{proof}
\textit{(i)} The symplectic ball $S(B^{2n}(R))$ consists of all $z\in
\mathbb{R}^{2n}$ such that $Mz^{2}\leq R^{2}$ where $M=(SS^{T})^{-1}$. It
follows from Lemma \ref{Lemma1} that $\Pi_{A}S(B^{2n}(R))$ is determined by
the inequality $(M/M_{BB})z_{A}^{2}\leq R^{2}$. We are going to show that the
symplectic eigenvalues $\lambda_{j}^{\sigma_{A}}(M/M_{BB})$ are all $\leq1$.
The inclusion (\ref{abbo1}) will then follow since we have, in view of
Williamson's diagonalization result,
\begin{equation}
M/M_{BB}=(S_{A}^{-1})^{T}D_{A}S_{A}^{-1}\label{mmbsa}%
\end{equation}
for some $S_{A}\in\operatorname*{Sp}(n_{A})$ and
\begin{equation}
D_{A}=%
\begin{pmatrix}
\Lambda_{A} & 0\\
0 & \Lambda_{A}%
\end{pmatrix}
\text{ \ , \ }\Lambda_{A}=\operatorname*{diag}(\lambda_{1}^{\sigma_{A}%
}(M/M_{BB}),...,\lambda_{n_{A}}^{\sigma_{A}}(M/M_{BB})).\label{diaga}%
\end{equation}
It follows that:
\begin{align}
\left(  M/M_{BB}\right)  z_{A}^{2} &  =\left(  (S_{A}^{-1})^{T}D_{A}S_{A}%
^{-1}\right)  z_{A}^{2}\label{eqExtra1}\\
&  =D_{A}\left(  S_{A}^{-1}z_{A}\right)  ^{2}\leq|S_{A}^{-1}z_{A}%
|^{2}\nonumber
\end{align}
The condition $z\in S_{A}(B^{2n_{A}}(R))$ being equivalent to $|S_{A}%
^{-1}z_{A}|^{2}\leq R^{2}$ thus implies $(M/M_{BB})z_{A}^{2}\leq R^{2}$ and
hence $S_{A}(B^{2n_{A}}(R))\subseteq\Pi_{A}S(B^{2n}(R))$.

To prove that we indeed have%
\begin{equation}
\lambda_{j}^{\sigma_{A}}((M/M_{BB}))\leq1\text{ \ \textit{for} }1\leq j\leq
n_{A} \label{infone}%
\end{equation}
we begin by noting that the symplectic eigenvalues $\lambda_{j}^{\sigma}(M)$
of $M=(SS^{T})^{-1}$ are all trivially equal to one, and hence also those of
its inverse $M^{-1}$: $\lambda_{j}^{\sigma}(M^{-1})=1$ for $1\leq j\leq n$. In
view of Lemma \ref{Lemma} the Hermitian matrix $M^{-1}+iJ$ is positive
semidefinite:
\begin{equation}
M^{-1}+iJ\geq0 \label{MiJ}%
\end{equation}
which implies that (\textit{cf}.(\ref{Minv}))
\begin{equation}
(M/M_{BB})^{-1}+iJ_{A}\geq0 \label{mmj}%
\end{equation}
(recall that $J=J_{A}\oplus J_{B}$). Applying now Lemma \ref{Lemma} to
$(M/M_{BB})^{-1}$ this implies that the inequalities (\ref{infone}) must hold
(\textit{cf}.(\ref{invspec})).

\textit{(ii)} In view of (\ref{bounda}) in Lemma \ref{Lemma1} the equality
(\ref{abbo2}) will hold if and only if $M/M_{BB}=(S_{A}S_{A}^{T})^{-1}$. If
$S=S_{A} \oplus S_{B}$, then $M=(SS^{T})^{-1}$ implies $M/M_{BB}=(S_{A}%
S_{A}^{T})^{-1}$ and we have the equality (\ref{abbo2}).

Conversely, suppose the equality (\ref{abbo2}) holds. From the inversion
formula (\ref{Minv}) we have%
\begin{equation}
M^{-1}=%
\begin{pmatrix}
(M/M_{BB})^{-1} & X\\
X^{T} & (M/M_{AA})^{-1}%
\end{pmatrix}
.
\end{equation}
where $X=-(M/M_{BB})^{-1}M_{AB}M_{BB}^{-1}$; since $M^{-1}\in
\operatorname*{Sp}(n)$ is symmetric it satisfies the symplecticity condition
\[
M^{-1}(J_{A}\oplus J_{B})M^{-1}=J_{A}\oplus J_{B}%
\]
which is in turn equivalent to the set of conditions
\begin{gather}
(M/M_{BB})^{-1}J_{A}(M/M_{BB})^{-1}+XJ_{B}X^{T}=J_{A}\label{eqArray1}\\
(M/M_{BB})^{-1}J_{A}X+XJ_{B}(M/M_{AA})^{-1}=0\label{eqArray2}\\
X^{T}J_{A}X+(M/M_{AA})^{-1}J_{B}(M/M_{AA})^{-1}J_{B}.\label{eqArray3}%
\end{gather}
Since
\begin{equation}
M/M_{BB}=(S_{A}S_{A}^{T})^{-1}\in\operatorname*{Sp}(n_{A}),\label{construct}%
\end{equation}
it follows from (\ref{eqArray1}) that
\begin{equation}
XJ_{B}X^{T}=0.\label{eqArray4}%
\end{equation}
Multiplying the identity (\ref{eqArray2}) on the right by $(M/M_{AA})X^{T}$
and using (\ref{eqArray4}), we obtain%
\[
(M/M_{BB})^{-1}J_{A}X(M/M_{AA})X^{T}=0
\]
that is
\begin{equation}
X(M/M_{AA})X^{T}=0.\label{eqArray5}%
\end{equation}
Since $(M/M_{AA})>0$, this is possible if and only if $X=0$. Finally, from
(\ref{eqArray3}) we conclude that $(M/M_{AA})^{-1}\in\operatorname*{Sp}%
(n_{B})$. Moreover, since $(M/M_{AA})^{-1}$ is symmetric and positive
definite, there exists $S_{B}\in\operatorname*{Sp}(n_{B})$, such that
$(M/M_{AA})^{-1}=S_{B}S_{B}^{T}$.

Altogether
\[
M^{-1}=SS^{T}=%
\begin{pmatrix}
S_{A}S_{A}^{T} & 0\\
0 & S_{B}S_{B}^{T}%
\end{pmatrix}
\]
hence $S=S_{A}\oplus S_{B}$, which concludes the proof.

\end{proof}

We remark that the proof above actually provides the means to calculate
explicitly the symplectic automorphisms $S_{A}$ in (\ref{abbo1}).
Recapitulating, it is constructed as follows: given $S\in\operatorname*{Sp}%
(n)$ calculate
\[
M=(SS^{T})^{-1}=%
\begin{pmatrix}
M_{AA} & M_{AB}\\
M_{BA} & M_{BB}%
\end{pmatrix}
\]
and then obtain the Schur complement (\ref{SchurB})
\[
M/M_{BB}=M_{AA}-M_{AB}M_{BB}^{-1}M_{BA}.
\]
The matrix $S_{A}$ is then obtained from (\ref{construct}) (observe that
$S_{A}$ is only defined up to a symplectic rotation, but this ambiguity is
irrelevant since $B^{2n_{A}}(R)$ is rotationally invariant).

\subsection{Discussion and extension}

Theorem \ref{Prop1} implies \textit{de facto} the Abbondandolo and Matveyev
result (\ref{abbo}) since formula (\ref{abbo1}) has the immediate consequence
that
\begin{equation}
\operatorname*{Vol}\nolimits_{2n_{A}}\Pi_{A}(S(B^{2n}(R))\geq\frac{(\pi
R^{2})^{n_{A}}}{n_{A}!} \label{abboineq}%
\end{equation}
noting that $S_{A}\in\operatorname*{Sp}(n_{A})$ is volume-preserving.
Similarly, the equality (\ref{abbo2}) implies
\begin{equation}
\operatorname*{Vol}\nolimits_{2n_{A}}\Pi_{A}(S(B^{2n}(R))=\frac{(\pi
R^{2})^{n_{A}}}{n_{A}!}. \label{abboeq}%
\end{equation}

Abbondandolo and Matveyev's \cite{abbo} however prove these relations for
projections on a general complex symplectic subspace $\mathbb{V}$ of
$(\mathbb{R}^{2n},\sigma)$ (that is such that $J\mathbb{V}=\mathbb{V}$) and
they show that the equality in (\ref{abbo}) holds if and only if the subspace
$S^{T}\mathbb{V}$ is complex, that is if $JS^{T}\mathbb{V=}S^{T}\mathbb{V}$.
Let us check that these conditions are satisfied when $\mathbb{V}%
=\mathbb{R}^{2n_{A}}\oplus0$. We first note that%
\[
J\mathbb{V}=(J_{A}\oplus J_{B})(\mathbb{R}^{2n_{A}}\oplus0)=\mathbb{R}%
^{2n_{A}}\oplus0=\mathbb{V}%
\]
hence $\mathbb{R}^{2n_{A}}\oplus0$ is complex in the sense above. Similarly,
the condition $JS^{T}\mathbb{V}=S^{T}\mathbb{V}$ is satisfied if
$S=S_{A}\oplus S_{B}$ since
\[
(J_{A}\oplus J_{B})(S_{A}^{T}\oplus S_{B}^{T})(\mathbb{R}^{2n_{A}}%
\oplus0)=\mathbb{R}^{2n_{A}}\oplus0;
\]
conversely the condition $JS^{T}\mathbb{V}=S^{T}\mathbb{V}$ is equivalent to
\[
(SS^{T})^{-1}J\mathbb{V}=\mathbb{V}%
\]
and if $\mathbb{V}=\mathbb{R}^{2n_{A}}\oplus0$ this is possible for
$S\in\operatorname*{Sp}(n)$ if and only if $S=S_{A}\oplus S_{B}$.

To show that Theorem \ref{Prop1} implies Abbondandolo and Matveyev's result
for arbitrary complex subspaces $\mathbb{V}$ is actually straightforward. To
see this, let $\mathbb{V}^{\sigma}$ be the symplectic orthocomplement of
$\mathbb{V}$ in $(\mathbb{R}^{2n},\sigma).$ Choose symplectic bases
$\mathcal{B}_{\mathbb{V}}$ of $\mathbb{V}$ and $\mathcal{B}_{\mathbb{V}%
^{\sigma}}$ of $\mathbb{V}^{\sigma}$ such that their union $\mathcal{B}$ is a
symplectic orthonormal basis of $\mathbb{R}^{2n}$ (this is easily done using
the symplectic version of the Gram--Schmidt construction for orthogonal bases;
see \cite{Birk}). Set $\dim\mathbb{V}=2n_{A}$ and $\dim\mathbb{V}^{\sigma
}=2n_{B}$ (hence $n=n_{A}+n_{B}$) and let $\mathcal{B}_{A}$ and $\mathcal{B}%
_{B}$ be symplectic bases of $(\mathbb{R}^{2n_{A}},\sigma_{A})$ and
$(\mathbb{R}^{2n_{B}},\sigma_{B})$, respectively, such that $\mathcal{B}%
_{A}\cup\mathcal{B}_{B}$ is a symplectic orthonormal basis of $(\mathbb{R}%
^{2n},\sigma)$. Let $U$ be the linear mapping $\mathbb{R}^{2n}\longrightarrow
\mathbb{R}^{2n}$ defined by $\mathcal{B}_{\mathbb{V}}=U(\mathcal{B}_{A})$ and
$\mathcal{B}_{\mathbb{V}^{\sigma}}=U(\mathcal{B}_{B})$; clearly $U$ is
symplectic and orthogonal. The mapping $\Pi_{\mathbb{V}}:\mathbb{R}%
^{2n}\longrightarrow\mathbb{R}^{2n}$ defined by $\Pi_{\mathbb{V}}=U\Pi
_{A}U^{-1}$ is then the projection onto $\mathbb{V}$ along $\mathbb{V}%
^{\sigma}$ and we thus have
\[
\Pi_{\mathbb{V}}(S(B^{2n}(R)))=U\Pi_{A}U^{-1}(S(B^{2n}(R))).
\]
Theorem \ref{Prop1} implies that there exists $S_{A}^{\prime}\in
\operatorname*{Sp}(n_{A})$ such that%
\[
\Pi_{A}(U^{-1}(S(B^{2n}(R))))\supseteq S_{A}^{\prime}(B^{2n_{A}}(R))
\]
and hence, since $U$ and $S_{A}^{\prime}$ are volume preserving,
\begin{equation}
\operatorname*{Vol}\Pi_{\mathbb{V}}(S(B^{2n}(R)))\geq\frac{(\pi R^{2})^{n_{A}%
}}{n_{A}!}\label{abbonexact1}%
\end{equation}
which is (\ref{abbo}). The \textquotedblleft exact\textquotedblright\ case in
(\ref{abbonexact1}) is equivalent to
\begin{equation}
\operatorname*{Vol}\Pi_{A}(S^{\prime}(B^{2n}(R)))=\frac{(\pi R^{2})^{n_{A}}%
}{n_{A}!}\label{abbonexact}%
\end{equation}
where $S^{\prime}=U^{-1}S$ with $U\in U(n)$. Theorem \ref{Prop1} implies that
the projection $\Pi_{A}(S^{\prime}(B^{2n}(R)))$ is either identical to
$S_{A}^{\prime}(B^{2n_{A}}(R))$ (for some $S_{A}^{\prime}\in\operatorname*{Sp}%
(n_{A})$) or strictly larger than $S_{A}^{\prime}(B^{2n_{A}}(R))$ (in which
case its volume cannot be ${(\pi R^{2})^{n_{A}}}/{n_{A}!}$); hence
(\ref{abbonexact}) holds if and only if
\[
\Pi_{A}(S^{\prime}(B^{2n_{A}}(R)))=S_{A}^{\prime}(B^{2n_{A}}(R))
\]
in which case $S^{\prime}=S_{A}^{\prime}\oplus S_{B}^{\prime}$ for some
$S_{B}^{\prime}\in\operatorname*{Sp}(n_{B})$ (\textit{cf.} Theorem
\ref{Prop1}). Finally, in view of the discussion above we have the
equivalences
\[
S^{\prime}=S_{A}^{\prime}\oplus S_{B}^{\prime}\Leftrightarrow JS^{\prime
T}(\mathbb{R}^{2n_{A}}\oplus0)=S^{\prime T}(\mathbb{R}^{2n_{A}}\oplus
0)\Leftrightarrow JS^{T}\mathbb{V}=S^{T}\mathbb{V}%
\]
where we used the identities $S^{\prime T}=S^{T}U$ and $U(\mathbb{R}^{2n_{A}%
}\oplus0)=\mathbb{V}$.


\section{Perspectives\label{SecPerspectives}}

A first natural question that arises is whether Theorem \ref{Prop1} can be
extended in some way to non-linear symplectic mappings, that is to general
symplectomorphisms of $(\mathbb{R}^{2n},\sigma)$. The first answer is that
there are formidable roadblocks to the passage from the linear to the
nonlinear case, as shortly mentioned in the Introduction. For instance,
Abbondandolo and Matveyev \cite{abbo} show, elaborating on ideas of Guth
\cite{Guth}, that for every $\varepsilon>0$ one can find a symplectomorphism
$f$ of $(\mathbb{R}^{2n},\sigma)$ defined near $B^{2n}(0,1)$ such that
\[
\operatorname*{Vol}(\Pi_{\mathbb{V}}f(B^{2n}(0,1))<\varepsilon\text{.}%
\]
They however speculate in \cite{abbo} that their projection result might still
hold true when the linear symplectic automorphism $S\in\operatorname*{Sp}(n)$
is replaced with a symplectomorphism $f$ of $(\mathbb{R}^{2n},\sigma)$ close
to a linear one. It would be interesting to apply our methods to tackle this
difficult problem.

Also, Theorem \ref{Prop1} could be used to shed some light on packing problems
(see the review \cite{Schlenk} by Schlenk) which form a notoriously difficult
area of symplectic topology.

Given the partitioning of $\mathbb{R}^{2n}= \mathbb{R}^{2n_{A}} \oplus
\mathbb{R}^{2n_{B}}$ it seems natural to expect some connection between
orthogonal projections of symplectic balls and the separability/entanglement
problem in quantum mechanics \cite{go18,Lami,ww}. We intend to address this
problem in a future work.

\vspace{1cm}

*******************************************************************

\textbf{Author's addresses:}

\begin{itemize}
\item \textbf{Nuno Costa Dias} and \textbf{Jo\~ao Nuno Prata:
}Escola Superior N\'autica Infante D. Henrique. Av. Eng.
Bonneville Franco, 2770-058 Pa\c{c}o d'Arcos, Portugal and Grupo
de F\'{\i}sica Matem\'{a}tica, Universidade de Lisboa, Av. Prof.
Gama Pinto 2, 1649-003 Lisboa, Portugal

\item \textbf{Maurice A. de Gosson}: University of Vienna, Faculty of Mathematics (NuHAG), Vienna, Austria.

\end{itemize}

\vspace{0.3cm}

\small

{\it E-mail address} (N.C. Dias): ncdias@meo.pt

{\it E-mail address} (M. de Gosson): maurice.de.gosson@univie.ac.at

{\it E-mail address} (J.N. Prata): joao.prata@mail.telepac.pt

\indent 

**************************************************************************

\end{document}